\numberwithin{equation}{section}
\newcommand{\mC}{\mathbb{C}}
\newcommand{\mD}{\mathbb{D}}
\newcommand{\mF}{\mathbb{F}}
\newcommand{\mN}{\mathbb{N}}
\newcommand{\mR}{\mathbb{R}}
\newcommand{\mS}{\mathbb{S}}
\newtheorem{theorem}{Theorem}[section]
\newtheorem{proposition}[theorem]{Proposition}
\theoremstyle{definition}
\theoremstyle{definition}
\newtheorem{definition}[theorem]{Definition}
\theoremstyle{definition}
\begin{document}

\keywords{real Banach algebras, projective free
  rings, Serre's conjecture, real symmetric function algebras, control theory}

\subjclass{Primary 46J40; Secondary 13C10, 46H20, 46M10, 54C40, 93D15 }

\title[Projective freeness of  real symmetric algebras]{Projective
  freeness of algebras of real symmetric functions}

\author{Amol Sasane}
\address{Department of Mathematics, Royal Institute of Technology,
   Stockholm 100 44, Sweden.}
\email{sasane@math.kth.se}

\begin{abstract}
  Let $\mD^n:=\{z=(z_1,\dots, z_n)\in \mC^n:|z_j|< 1, \;j=1,\dots
  ,n\}$, and let $\overline{\mD}^n$ denote its closure in $\mC^n$.
  Consider the ring
$$
C_{\textrm{r}}(\overline{\mD}^n;\mC)
=\{f:\overline{\mD}^n\rightarrow \mC:f\textrm{ is continuous and }
f(z)=\overline{f(\overline{z})} \;(z\in \overline{\mD}^n)\}
$$
with pointwise operations, where $\overline{\cdot}$ is used
appropriately to denote both (componentwise) complex conjugation and
closure.  It is shown that $ C_{\textrm{r}}(\overline{\mD}^n;\mC)$ is
projective free, that is, finitely generated projective modules over
$C_{\textrm{r}}(\overline{\mD}^n; \mC)$ are free. Let $A$ denote the
polydisc algebra, that is, the set of all continuous functions defined
on $\overline{\mD}^n$ that are holomorphic in $\mD^n$.  For $N$ a
positive integer, let $\partial^{-N} A$ denote the algebra of
functions $f\in A$ whose complex partial derivatives of all orders up
to $N$ belong to $A$.  We show the projective freeness of each of the
real symmetric algebras $
\partial^{-N}A_{\textrm{r}}
=\{f \in \partial^{-N} A:
f(z)=\overline{f(\overline{z})} \;(z\in \overline{\mD}^n)\}$.
\end{abstract}

\maketitle

\section{Introduction}

In this article we will show the projective freeness of some real
Banach algebras of real symmetric functions on the polydisc
$$
\overline{\mD}^n:=\{z=(z_1,\dots, z_n)\in \mC^n:|z_j|\leq 1,
  \;j=1,\dots ,n\}.
$$
We begin with the definition of a projective free
ring.

\begin{definition}
  Let $R$ be a commutative ring with identity. The ring $R$ is said to
  be {\em projective free} if every finitely generated projective
  $R$-module is free. Recall that if $M$ is a finitely generated
  $R$-module, then
\begin{enumerate}
\item $M$ is called {\em free} if $M \simeq R^k$ for some integer $k\geq 0$;
\item $M$ is called {\em projective} if there exists an $R$-module $N$
  and an integer $m\geq 0$ such that $M\oplus  N = R^m$.
\end{enumerate}
\end{definition}

\noindent In terms of matrices (see \cite[Proposition~2.6]{Coh} or
\cite[Lemma~2.2]{BalRodSpi}), the ring $R$ is projective free if and
only if every idempotent matrix $P$ is conjugate (by an invertible
matrix $S$) to a diagonal matrix with $1$s and $0$s on the diagonal,
that is, for every $m\in \mN$ and every $P\in R^{m\times m}$
satisfying $P^2=P$, there exists an $S\in R^{m\times m}$ such that $S$
is invertible as an element of $R^{m\times m}$ and
$$
S^{-1} P S=\left[\begin{array}{cc}
I_k & 0\\
0 & 0
\end{array}\right]
$$
for some $k\geq 0$.

It was shown in 1976 by Quillen and Suslin, independently, that the polynomial
ring $\mF[x_1, \dots , x_n]$ with coefficients in a field $\mF$ is
projective free, settling Serre's conjecture from 1955 (see \cite{Lam}).

In the context of a commutative semisimple unital complex Banach
algebra $R$, \cite[Corollary~1.4]{BruSas} says that the
contractability of the maximal ideal space $M(R)$ of $R$, equipped
with the weak-$\ast$ topology, suffices for $R$ to be projective free.
In particular, the complex Banach algebra $C(\overline{\mD}^n;\mC)$ of
complex-valued continuous functions on the polydisc $\overline{\mD}^n$
is projective free, since its maximal ideal space can be identified
with $\overline{\mD}^n$, which is clearly contractible. Similarly, the
polydisc algebra $A$ (the set of all continuous functions defined on
$\overline{\mD}^n$ that are holomorphic in $\mD^n$, with the usual
pointwise operations and the supremum norm) is also projective free.

Projective freeness of rings also play a role in Control Theory in the
context of stabilization of control systems over rings, see for
instance \cite{KhaSon}. The motivation for studying ``systems over
rings'' comes from situations in which one is interested in the study
of parametrized classes of linear control systems; this may be
handled by studying linear control systems in which the usual matrices
$A,B$ (describing the state differential equation $x'(t)=Ax(t)+Bu(t)$)
have entries that are functions of the parameters, with these
functions having a specific structure (polynomial, continuous,
analytic, etc.).  The solution to a control theoretic synthesis
problem over the ring will provide a parametrized family of solutions
to the corresponding problem for each system in the family; see
\cite{Son}. The interest in projective free rings stems from the fact
that many simplifications occur in the theory of systems over rings
under the assumption of projective freeness; for example in the
context of the problem of pole assignment; see
\cite[Corollary~3.6]{BSSV}. Moreover, in Control Theory, rings of real
symmetric functions in play a particularly important role. A function
$f: \mD^n\rightarrow \mC$ is said to be {\em real symmetric} if
$$
f(z)=\overline{f(\overline{z})} \quad (z\in \mD^n).
$$
(For $z=(z_1,\dots, z_n)\in \mC^n$,
$\overline{z}:=(\overline{z_1},\dots,\overline{z_n})$.)
Algebraic-analytic properties (some of which are relevant in control
theory) of various real symmetric algebras have also been studied
recently for example in \cite{MorRup}, \cite{MorWic0}, \cite{MorWic},
\cite{Wic2}.

It is known that the projective freeness of the complexification
$R\otimes \mC$ of an algebra $R$ over the reals does not in general
imply the projective freeness of $R$. For example, the ring
$C(\mS^2;\mC)$ of {\em complex}-valued continuous functions on the
unit sphere $\mS^2$ (in $\mR^3$) can be shown to be projective free
(see for example \cite[p.  38]{Lam}). On the other hand, a consequence
of the Hairy Ball Theorem (see for example \cite{Mil}) is that the
ring $C(\mS^2;\mR)$ of {\em real}-valued continuous functions on
$\mS^2$ is not projective free (see for example \cite[p. 30-33]{Lam}).

In this article, our first main result
is that the ring $C_{\textrm{r}}(\overline{\mD}^n;\mC)$ of real
symmetric functions from $C(\overline{\mD}^n;\mC)$, that is,
$$
C_{\textrm{r}}(\overline{\mD}^n;\mC)=\{f\in C(\overline{\mD}^n;\mC):
f(z)=\overline{f(\overline{z})} \;(z\in \overline{\mD}^n)\}
$$
is projective free. We note that $C_{\textrm{r}}(\overline{\mD}^n;\mC)$ is a
{\em real} Banach algebra, and not a complex Banach algebra with the
usual operations (for example we have $1\in C_{\textrm{r}}(\overline{\mD}^n;\mC)
\not\owns i\cdot 1$).

\begin{theorem}
\label{theorem}
$C_{\textrm{\em r}}(\overline{\mD}^n; \mC)$ is a projective free ring.
\end{theorem}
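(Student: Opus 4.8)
The plan is to verify the matrix criterion for projective freeness recalled above. Writing $R:=C_{\textrm{r}}(\overline{\mD}^n;\mC)$, we must show that every idempotent $P\in R^{m\times m}$ is conjugate, by an invertible element of $R^{m\times m}$, to $\mathrm{diag}(I_k,0)$ for some $k$. The geometric input is that $\overline{\mD}^n$ is convex with the origin a \emph{real} point, so the straight-line retraction onto $0$ is compatible with the symmetry $z\mapsto\overline{z}$; the argument transports this retraction to idempotent matrices and then uses that $\mR$ is (trivially) projective free.

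First I would note the elementary fact that $R$, and hence $R^{m\times m}$, is a unital real Banach algebra, since a uniform limit of real-symmetric functions is real-symmetric. Next, for $t\in[0,1]$ define $\Phi_t\colon R\to R$ by $(\Phi_t f)(z)=f(tz)$. Because $t$ is real we have $t\overline{z}=\overline{tz}$, so $\Phi_t f$ is again real-symmetric, and $\Phi_t$ is a unital $\mR$-algebra homomorphism with $\Phi_1=\mathrm{id}$ and $\Phi_0 f=f(0)\cdot 1$. Applying $\Phi_t$ entrywise yields ring homomorphisms $R^{m\times m}\to R^{m\times m}$; hence $P_t:=\Phi_t(P)$, that is, $P_t(z)=P(tz)$, is a path of idempotents in $R^{m\times m}$ with $P_1=P$, while $P_0$ is the constant matrix $P(0)$. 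Uniform continuity of $P$ on the compact set $\overline{\mD}^n$ makes $t\mapsto P_t$ continuous for the Banach-algebra norm.

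Then I would invoke the standard Banach-algebra lemma on close idempotents: in a unital Banach algebra $B$, if $e,f$ are idempotents with $\|e-f\|$ small enough, then $u:=fe+(1-f)(1-e)$ satisfies $ue=fu$ and is invertible (its inverse being given by a Neumann series), so $ueu^{-1}=f$; consequently a norm-continuous path of idempotents has conjugate endpoints, a conjugator being the product of such $u$'s over a sufficiently fine partition of $[0,1]$. The key point for us is that each such $u$ is assembled from the idempotents $P_{t_j}\in R^{m\times m}$ and the identity using only ring operations, and its inverse stays in $R^{m\times m}$ by the Neumann series; hence the whole conjugator lies in $R^{m\times m}$, and $P$ is conjugate over $R^{m\times m}$ to the constant matrix $P(0)$.

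Finally, since $0=\overline{0}$, every $f\in R$ has $f(0)=\overline{f(0)}\in\mR$, so $P(0)$ is an idempotent matrix over the field $\mR$; by ordinary linear algebra there is an invertible $S_0\in\mR^{m\times m}\subseteq R^{m\times m}$ with $S_0^{-1}P(0)S_0=\mathrm{diag}(I_k,0)$, where $k=\operatorname{rank}P(0)$. Composing this with the previous conjugation produces the desired $S$, and the matrix criterion then gives the theorem. The main thing needing care is the lemma on homotopic idempotents together with the bookkeeping that ensures the conjugating matrices never leave the real-symmetric ring $R^{m\times m}$; everything else is routine verification.
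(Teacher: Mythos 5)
Your proof is correct, and it is genuinely different from---and considerably more elementary than---the one in the paper. The paper proceeds by induction on $k$, showing that each $C_{\mathrm{r}}([-1,1]^{n-k}\times\overline{\mD}^{k};\mC)$ is projective free: the base case $C([-1,1]^{n};\mR)$ is handled via vector bundle theory (contractibility of the cube implies triviality of real vector bundles), and the inductive step conjugates the given idempotent $P$ over the ``upper half'' set $\Delta_{k+1}^{+}$ using both the conjugator supplied by the inductive hypothesis on $\Delta_{k}$ and one supplied by projective freeness of the complex algebra $C(\Delta_{k+1}^{+};\mC)$, then extends by reflection using the real symmetry of $P$. Your argument instead exploits directly that $\overline{\mD}^{n}$ is star-shaped about the real fixed point $0$: the radial pull-backs $\Phi_{t}f(z)=f(tz)$ are unital $\mR$-algebra endomorphisms of $R=C_{\mathrm{r}}(\overline{\mD}^{n};\mC)$, so $t\mapsto P_{t}=\Phi_{t}(P)$ is a norm-continuous path of idempotents in $R^{m\times m}$ joining $P$ to the constant real idempotent $P(0)$; the standard Banach-algebra lemma (norm-close idempotents $e,f$ are conjugate via the invertible $u=fe+(1-f)(1-e)$, with $u-1=(f-e)(2e-1)$ controlled by a Neumann series, and the bound uniform along a compact path) then conjugates $P$ to $P(0)$ inside $R^{m\times m}$; finally $P(0)$ is diagonalized over $\mR\subset R$ by linear algebra. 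The details you indicate---preservation of real symmetry under $\Phi_{t}$, continuity of the path by uniform continuity on the compact polydisc, the Neumann series staying in $R^{m\times m}$, and $P(0)\in\mR^{m\times m}$ because $0=\overline{0}$---all check out. What your route buys is a shorter, self-contained proof that bypasses both the vector-bundle input and the induction; what the paper's route buys is, as a byproduct, projective freeness of all the intermediate rings $C_{\mathrm{r}}(\Delta_{k};\mC)$.
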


We will also show the projective freeness of certain subalgebras of
the real symmetric polydisc algebra. In order to state this result, we
will first introduce some notation.  For $N$ a positive integer, let
$\partial^{-N} A$ denote the Banach algebra of all functions $f\in A$
whose complex partial derivatives of all orders up to $N$ belong to
$A$, with the norm
$$
\|f\|_{\scriptscriptstyle \partial^{-N}
  A}:=\sum_{\alpha_1+\dots+\alpha_n\leq N}\frac{1}{\alpha_1 ! \dots
  \alpha_n!}
\sup_{z\in\mD^n}\left|\frac{\partial^{\alpha_1+\dots+\alpha_n}f}{\partial
    z_1^{\alpha_1} \dots \partial z_n^{\alpha_n}}(z)\right|.
$$
We show that each of the real symmetric algebras
$$
\partial^{-N}A_{\textrm{r}}
=\{f \in \partial^{-N} A:
f(z)=\overline{f(\overline{z})} \;(z\in \overline{\mD}^n)\}
$$
is projective free.

\begin{theorem}
\label{theorem2}
$\partial^{-N}A_{\textrm{\em r}}$ is a projective free ring.
\end{theorem}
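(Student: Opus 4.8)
The plan is to use a dilation homotopy to move an arbitrary idempotent matrix over $\partial^{-N}A_{\textrm{r}}$ to a constant one. The only genuinely new point compared with the continuous case is that the dilations $z\mapsto(1-t)z$ preserve holomorphy \emph{and} the differentiability conditions defining $\partial^{-N}A$, and that this deformation is continuous in the stronger norm.

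First I would record that $\partial^{-N}A_{\textrm{r}}$ is a real Banach algebra. The map $\tau$ given by $(\tau f)(z)=\overline{f(\overline z)}$ is a conjugate-linear algebra involution of $\partial^{-N}A$, and it is isometric because $\partial^{\alpha}(\tau f)(z)=\overline{(\partial^{\alpha}f)(\overline z)}$ for $|\alpha|\le N$, so that $\sup_{\mD^{n}}|\partial^{\alpha}(\tau f)|=\sup_{\mD^{n}}|\partial^{\alpha}f|$. Hence $\partial^{-N}A_{\textrm{r}}$, being the fixed-point set of $\tau$, is a closed real subalgebra of $\partial^{-N}A$, and $(\partial^{-N}A_{\textrm{r}})^{m\times m}$ is a real Banach algebra for every $m$.

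Next, let $P\in(\partial^{-N}A_{\textrm{r}})^{m\times m}$ with $P^{2}=P$. For $t\in[0,1]$ I would set $P_{t}(z):=P((1-t)z)$, $z\in\overline{\mD}^{n}$, and verify: (i) each entry of $P_{t}$ lies in $\partial^{-N}A_{\textrm{r}}$, since for $f\in\partial^{-N}A$ the function $f((1-t)\,\cdot\,)$ is holomorphic in $\mD^{n}$, continuous on $\overline{\mD}^{n}$, satisfies $\partial^{\alpha}(f((1-t)\,\cdot\,))(z)=(1-t)^{|\alpha|}(\partial^{\alpha}f)((1-t)z)\in A$ for $|\alpha|\le N$, and is real symmetric whenever $f$ is; (ii) $P_{t}^{2}=P_{t}$, as this is a pointwise identity; (iii) $t\mapsto P_{t}$ is continuous in $\|\cdot\|_{\partial^{-N}A}$, which reduces, for each entry $f$ and each $|\alpha|\le N$ with $g:=\partial^{\alpha}f\in A$, to controlling
$$
\sup_{z\in\overline{\mD}^{n}}\big|(1-t)^{|\alpha|}g((1-t)z)-(1-s)^{|\alpha|}g((1-s)z)\big|\le\big|(1-t)^{|\alpha|}-(1-s)^{|\alpha|}\big|\,\|g\|_{\infty}+\sup_{z}\big|g((1-t)z)-g((1-s)z)\big|,
$$
the last term tending to $0$ as $s\to t$ by uniform continuity of $g$ on the compact set $\overline{\mD}^{n}$. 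Finally $P_{0}=P$, while $P_{1}$ is the constant matrix $C:=P(0)$, which is real (as $C=\overline{P(\overline 0)}=\overline C$) and idempotent (evaluate $P^{2}=P$ at $0$).

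To conclude, the constant real idempotent $C$ is similar over $\mR$, hence via constant matrices over $\partial^{-N}A_{\textrm{r}}$, to $\mathrm{diag}(I_{k},0)$ with $k=\mathrm{rank}\,C$; and by the standard fact that idempotents joined by a norm-continuous path in a unital Banach algebra are similar via an invertible element (locally, if $\|p-q\|<(1+2\|q\|)^{-1}$ then $u:=qp+(1-q)(1-p)$ is invertible with $upu^{-1}=q$, and one chains such steps over the compact interval $[0,1]$), the path $(P_{t})$ provides an invertible $T$ over $\partial^{-N}A_{\textrm{r}}$ with $T^{-1}P_{0}T=P_{1}$. Composing the two similarities exhibits $P$ as conjugate over $\partial^{-N}A_{\textrm{r}}$ to a diagonal $\{0,1\}$-matrix, which by the matrix characterization of projective freeness recalled above proves the theorem. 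The only step that is not purely formal is the $\|\cdot\|_{\partial^{-N}A}$-continuity in (iii): one must dominate all partial derivatives up to order $N$ uniformly under the dilation, but this is routine.
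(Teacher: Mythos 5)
Your approach is correct, but it is genuinely different from the paper's. The paper proceeds module-theoretically in the style of Tannenbaum: it first computes the maximal ideal space of $\partial^{-N}A$ (Proposition~\ref{prop_max_id_sp}) in order to invoke \cite{BruSas} and conclude that a finitely generated projective $\partial^{-N}A_{\textrm{r}}$-module $M$ becomes free after complexification; then the obstruction to descending a free basis to $M$ is encoded in a transition matrix $U$ with $U(z)\overline{U(\overline z)}=I$, which is factored as $U(z)=V(z)(\overline{V(\overline z)})^{-1}$ by integrating an ODE along the dilation $U_t(z)=U(tz)$ and supplying an initial condition via a matrix-logarithm/Jordan-form argument. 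You instead work entirely at the level of idempotent matrices: the dilation $P_t(z)=P((1-t)z)$ is shown to stay in $(\partial^{-N}A_{\textrm{r}})^{m\times m}$ (holomorphy, $C^N$-regularity, and real symmetry are all preserved, and your estimate
$\big|(1-t)^{|\alpha|}g((1-t)z)-(1-s)^{|\alpha|}g((1-s)z)\big|\le\big|(1-t)^{|\alpha|}-(1-s)^{|\alpha|}\big|\,\|g\|_\infty+(1-s)^{|\alpha|}\sup_z|g((1-t)z)-g((1-s)z)|$
together with uniform continuity of $g=\partial^\alpha f$ on $\overline{\mD}^n$ gives norm continuity), and then the standard ``nearby idempotents are conjugate'' lemma, with $u=qp+(1-q)(1-p)=1+(2q-1)(p-q)$ invertible by Neumann series in the \emph{real} matrix Banach algebra, is chained across the compact parameter interval to conjugate $P$ to the constant real idempotent $P(0)$, and finally over $\mR$ to $\mathrm{diag}(I_k,0)$. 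Both proofs use a dilation homotopy at heart, but yours avoids Proposition~\ref{prop_max_id_sp}, the appeal to \cite{BruSas}, the complexification step, and the ODE/logarithm machinery entirely. One further advantage worth flagging: since your argument uses nothing beyond sup-norm uniform continuity and closure of the real-symmetric subalgebra under dilations, it applies verbatim to $A_{\textrm{r}}$ itself, which the paper leaves as an open question in its final paragraph --- you may want to double-check that, but I see no obstruction.
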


We give the proofs of these two main results, namely
Theorem~\ref{theorem} and Theorem~\ref{theorem2}, in
Section~\ref{Section_theorem} and Section~\ref{Section_theorem2},
respectively.

\section{Proof of Theorem~\ref{theorem}}
\label{Section_theorem}

We will first recall the following result from vector bundle theory.

\begin{proposition}
\label{prop}
  $C([-1,1]^n; \mR)$ is a projective free ring.
\end{proposition}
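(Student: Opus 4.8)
The plan is to reduce the claim to the contractibility of $[-1,1]^n$ together with the standard correspondence between finitely generated projective modules over $C(X;\mR)$ and vector bundles over $X$ (the real Serre–Swan theorem). Concretely, by the matrix characterization recalled in the introduction, it suffices to show that every idempotent $P\in C([-1,1]^n;\mR)^{m\times m}$ is conjugate over $C([-1,1]^n;\mR)$ to a constant diagonal idempotent. Such a $P$ defines a subbundle $E=\mathrm{im}\,P$ of the trivial bundle $[-1,1]^n\times\mR^m$, and conjugating $P$ to a constant diagonal matrix is precisely exhibiting a trivialization $E\simeq[-1,1]^n\times\mR^k$ compatible with a complementary trivialization of $\ker P=\mathrm{im}(I-P)$. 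So the whole statement is equivalent to: every vector bundle over $[-1,1]^n$ is trivial.

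First I would invoke the fact that $[-1,1]^n$ is contractible (it is convex, hence the identity map is homotopic to a constant map through $[-1,1]^n$), together with the homotopy invariance of vector bundles over a paracompact (indeed compact metric) base: if $f_0,f_1:X\to Y$ are homotopic and $F$ is a bundle over $Y$, then $f_0^*F\simeq f_1^*F$. Applying this to $X=Y=[-1,1]^n$, $f_1=\mathrm{id}$, $f_0\equiv\mathrm{pt}$, any bundle $E=\mathrm{id}^*E$ is isomorphic to the pullback of $E|_{\{\mathrm{pt}\}}$ under the constant map, which is the trivial bundle of rank $\dim E|_{\{\mathrm{pt}\}}$. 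Hence $E$ is trivial.

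Then I would translate back: given the idempotent $P$, the bundle $E=\mathrm{im}\,P$ has locally constant rank $k$ (constant on the connected base $[-1,1]^n$), and both $E$ and its complement $I-P$-image are trivial; choosing global frames for each and assembling them into the columns of a matrix $S\in C([-1,1]^n;\mR)^{m\times m}$ yields an invertible $S$ (its columns form a basis of $\mR^m$ at each point, by continuity and compactness the determinant is bounded away from $0$) with $S^{-1}PS=\mathrm{diag}(I_k,0)$. This gives projective freeness.

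The main obstacle, such as it is, is purely expository: one must be a little careful that the rank is globally constant (which uses connectedness of $[-1,1]^n$, so the splitting really is into trivial bundles of fixed ranks $k$ and $m-k$) and that the frames can be chosen \emph{globally continuous} — but this is exactly the content of the triviality statement just proved, not an extra difficulty. Since the proposition is stated as well known from vector bundle theory, I expect the paper to simply cite the homotopy invariance of bundles over a contractible paracompact space (or the Serre–Swan correspondence plus contractibility) rather than reprove it; I would do the same, attributing it to a standard reference and keeping the argument to the two lines above.
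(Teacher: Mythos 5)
Your argument is essentially the same as the paper's: invoke the Serre--Swan correspondence between finitely generated projective $C([-1,1]^n;\mR)$-modules and real vector bundles over $[-1,1]^n$, then use contractibility of $[-1,1]^n$ to conclude that all such bundles are trivial (the paper cites Milnor--Stasheff for the module/bundle dictionary and Le Potier for triviality over a contractible base). The extra detail you supply about constant rank and assembling frames into the conjugating matrix $S$ is a correct unwinding of the same facts, not a different route.
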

\begin{proof} If $Q$ is a finitely generated projective module over
  $C([-1,1]^n; \mR)$, then it can be shown that there exists a vector
  bundle $\xi$ such that $Q\simeq S(\xi)$, where $S(\xi)$ denotes the
  $C([-1,1]^n; \mR)$-module consisting of all cross sections of $\xi$;
  see for example \cite[p.36, 3-F.(b)]{MilSta}. Moreover, $S(\xi)$ is
  free if and only if $\xi$ is trivial; see \cite[p.36,
  3-F.(a)]{MilSta}. But as $[-1,1]^n$ is contractible, $\xi$ is
  trivial (see for example \cite[Proposition~3.2.1, p.40]{Pot}).
\end{proof}

\begin{proof}[Proof of Theorem~\ref{theorem}]
We will prove by
  induction on $k$ ($0\leq k\leq n$) that each of the rings
  $C_{\textrm{r}}([-1,1]^{n-k}\times \overline{\mD}^k;\mC)$, that is,
$$
  \left\{f:[-1,1]^{n-k}\times \overline{\mD}^k\rightarrow \mC:
\begin{array}{ll}f\textrm{ is continuous and }\\
  f(z)=\overline{f(\overline{z})} \;(z \in [-1,1]^{n-k}\times
  \overline{\mD}^k)
\end{array}\right\},
$$
is projective free.

In order to avoid cumbersome notation, we set $
\Delta_k:=[-1,1]^{n-k}\times \overline{\mD}^k.$ Then
$[-1,1]^n=\Delta_0\subset \Delta_1\subset \dots \subset
\Delta_{n-1}\subset \Delta_n=\overline{\mD}^n$.

For $k=0$, the ring $C_{\textrm{r}}(\Delta_k;\mC)$ is precisely $C([-1,1]^n;
\mR)$, which is indeed projective free by Proposition~\ref{prop}.

Suppose that for some $k$ such that $0\leq k\leq n$, the ring $
C_{\textrm{r}}(\Delta_k;\mC)$ is projective free. We want to show that
$C_{\textrm{r}}(\Delta_{k+1};\mC)$ is then also projective free.

To this end, let us consider an idempotent $P$ with entries from
$C_{\textrm{r}}(\Delta_{k+1};\mC)$. In particular, the restriction
$P|_{\Delta_k}$ is an idempotent with all its entries belonging to
$C_{\textrm{r}}(\Delta_k;\mC)$.  By the induction hypothesis, since
$C_{\textrm{r}}(\Delta_k;\mC)$ is projective free, there is a $v\in
(C_{\textrm{r}}(\Delta_k;\mC) )^{m\times m}$ such that $v$ is invertible in
$(C_{\textrm{r}}(\Delta_k;\mC))^{m\times m}$, and
$$
v^{-1}(z)P(z) v(z)=\left[\begin{array}{cc} I_\ell & 0 \\0 &
    0\end{array}\right] \quad (z\in \Delta_k)
$$
for some $\ell\geq 0$.  Consider the matrix $\widetilde{P}$ given as
follows: for elements $z$ of
$$
\Delta_{k+1}^+:=\left\{ (r,a+ib,\zeta)\in \mC^n: \begin{array}{lll}
r\in [-1,1]^{n-k-1}\\
a\in \mR,\; b\geq 0,\; a+ib \in \overline{\mD},\\
\zeta \in \overline{\mD}^k\end{array}
\right\},
$$
we set
\begin{equation}
\label{eq_1}
\widetilde{P}(z)= (v(r,a,\zeta))^{-1} P(z) v(r,a,\zeta)\quad (z=(r,a+ib,\zeta) \in \Delta_{k+1}^+).
\end{equation}
Then for $z=(r,a+ib,\zeta) \in \Delta_{k+1}^+$, we have
\begin{eqnarray*}
(\widetilde{P}(z))^2
&=&
(v(r,a,\zeta))^{-1} P(z) v(r,a,\zeta)(v(r,a,\zeta))^{-1} P(z) v(r,a,\zeta)
\\
&=&(v(r,a,\zeta))^{-1} P(z) v(r,a,\zeta)=\widetilde{P}(z).
\end{eqnarray*}
Also, it is clear that $\widetilde{P}\in
(C(\Delta_{k+1}^+;\mC))^{m\times m}$. As $\Delta_{k+1}^+$ is
contractible, $C(\Delta_{k+1}^+;\mC)$ is projective free. Hence we can
find a $V \in (C(\Delta_{k+1}^+;\mC))^{m\times m}$ which is invertible
as an element of $(C(\Delta_{k+1}^+;\mC))^{m\times m}$ such that
\begin{equation}
\label{eq_2}
(V(z))^{-1}\widetilde{P}(z) V(z)=
\left[\begin{array}{cc} I_{\widetilde{\ell}} & 0 \\0 & 0\end{array}\right]\quad (z \in \Delta_{k+1}^+)
\end{equation}
for some $\widetilde{\ell}\geq 0$. As the pointwise rank of $P$
matches with that of $\widetilde{P}$ on $\Delta_{k+1}^+$, it follows
from \eqref{eq_1} and \eqref{eq_2} that $\ell=\widetilde{\ell}$.  Thus
\eqref{eq_2} becomes
$$
(V(z))^{-1}\widetilde{P}(z) V(z)=
\left[\begin{array}{cc} I_{\ell} & 0 \\0 & 0\end{array}\right] \quad (z \in \Delta_{k+1}^+).
$$
In particular, we have for $z=(r,a,\zeta) \in \Delta_k$ that
\begin{equation}
\label{eq_3}
(V(r,a,\zeta))^{-1}\widetilde{P}(r,a,\zeta) V(r,a,\zeta)=
\left[\begin{array}{cc} I_{\ell} & 0 \\0 & 0\end{array}\right].
\end{equation}
But
$$
\widetilde{P}(r,a,\zeta)=(v(r,a,\zeta))^{-1} P(r,a,\zeta) v(r,a,\zeta)=
\left[\begin{array}{cc} I_\ell & 0 \\0 & 0\end{array}\right].
$$
Hence from \eqref{eq_3} and the above, we have
$$
(V(r,a,\zeta))^{-1}\left[\begin{array}{cc} I_\ell & 0 \\0 & 0\end{array}\right]V(r,a,\zeta)
=\left[\begin{array}{cc} I_{\ell} & 0 \\0 & 0\end{array}\right] \quad ((r,a,\zeta) \in \Delta_k).
$$
By pre- and post- multiplying this with $V(r,a,\zeta)$ and $(V(r,a,\zeta))^{-1}$,
respectively, we obtain
$$
\left[\begin{array}{cc} I_\ell & 0 \\0 & 0\end{array}\right]
=V(r,a,\zeta)\left[\begin{array}{cc} I_{\ell} & 0 \\0 & 0\end{array}\right](V(r,a,\zeta))^{-1} \quad ((r,a,\zeta) \in \Delta_k).
$$
Now define $U$ by
$$
U(z)=v(r,a,\zeta) V(z) (V(r,a,\zeta))^{-1} \quad (z=(r,a+ib,\zeta)\in \Delta_{k+1}^+).
$$
First of all, $U\in (C(\Delta_{k+1}^+;\mC))^{m\times m}$ and
moreover, it is invertible as an element of
$(C(\Delta_{k+1}^+;\mC))^{m\times m}$.  We also have for
$z=(r,a+ib,\zeta)\in \Delta_{k+1}^+$ that
\begin{eqnarray}
\nonumber
& & (U(z))^{-1}P(z)U(z)\phantom{\left[\begin{array}{cc} I_\ell & 0 \\0 & 0\end{array}\right]}\\
\nonumber
&=&
V(r,a,\zeta) (V(z))^{-1} (v(r,a,\zeta))^{-1} P(z) v(r,a,\zeta) V(z)(V(r,a,\zeta))^{-1}\phantom{\left[\begin{array}{cc} I_\ell & 0 \\0 & 0\end{array}\right]}
\\\nonumber
&=&
V(r,a,\zeta) (V(z))^{-1}\widetilde{P}(z) V(z) (V(r,a,\zeta))^{-1}\phantom{\left[\begin{array}{cc} I_\ell & 0 \\0 & 0\end{array}\right]}\\
&=&
V(r,a,\zeta)\left[\begin{array}{cc} I_{\ell} & 0 \\0 & 0\end{array}\right](V(r,a,\zeta))^{-1}\label{eq_4}
=\left[\begin{array}{cc} I_\ell & 0 \\0 & 0\end{array}\right].
\end{eqnarray}
Finally, for $(r,a,\zeta)\in \Delta_k$,
 $
U(r,a,\zeta)=v(r,a,\zeta)
V(r,a,\zeta) (V(r,a,\zeta))^{-1}=v(r,a,\zeta),
$
and so $U|_{\Delta_k}$ belongs to $ (C_{\textrm{r}}(\Delta_k;\mC
))^{m\times m}$. For $z=(r,a+ib,\zeta)\in \Delta_{k+1}$, we set
$$
S(z)=S(r,a+ib,\zeta)=\left\{ \begin{array}{ll} U(r,a+ib,\zeta) & \textrm{if }b\geq 0\\
    \overline{U(r,a-ib,\overline{\zeta} )} & \textrm{if }b\leq 0.
\end{array}\right.
$$
(Here the notation $\overline{U(\xi)}$ is used to denote the matrix
obtained from $U(\xi)$ by taking complex conjugates entry-wise.)
Owing to the fact that $U|_{\Delta_k}$ belongs to $
(C_{\textrm{r}}(\Delta_k;\mC ))^{m\times m}$, the above gives us a
well-defined element $S$ in $ (C_{\textrm{r}}(\Delta_{k+1};
\mC))^{m\times m}$. Moreover, $S$ is invertible in
$(C_{\textrm{r}}(\Delta_{k+1}; \mC))^{m\times m}$.  Using the
real symmetry of $P$ and \eqref{eq_4}, we obtain that
$$
(S(z))^{-1} P(z) S(z)=
\left[\begin{array}{cc} I_\ell & 0 \\0 & 0\end{array}\right]
\quad (z\in \Delta_{k+1}).
$$
Thus $C_{\textrm{r}}(\Delta_{k+1}; \mC)$ is projective free. By
induction, it follows that $C_{\textrm{r}}(\Delta_{k}; \mC)$ is
projective free for all $0\leq k \leq n$. In particular, for $k=n$,
the ring $C_{\textrm{r}}(\Delta_{n};
\mC)=C_{\textrm{r}}(\overline{\mD}^n; \mC)$ is projective free.  This
completes the proof.
\end{proof}

\section{Proof of Theorem~\ref{theorem2}}
\label{Section_theorem2}

We begin by showing that the maximal ideal space of $\partial^{-N}A$
can be identified with the polydisc $\overline{\mD}^n$. This is a
generalization of an analogous one variable result; see \cite[Theorem~1.3]{SasTre}.

\begin{proposition}\label{prop_max_id_sp}
  The maximal ideal space of $\partial^{-N} A$ is homeomorphic to
  $\overline{\mD}^n$.
\end{proposition}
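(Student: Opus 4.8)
The plan is to identify the maximal ideal space of $\partial^{-N}A$ with $\overline{\mD}^n$ by exhibiting the natural evaluation map as a homeomorphism. First I would set up the candidate map $\Phi:\overline{\mD}^n\to M(\partial^{-N}A)$ sending a point $w\in\overline{\mD}^n$ to the evaluation character $\varepsilon_w:f\mapsto f(w)$; this is well defined since each $f\in\partial^{-N}A\subset A$ is continuous on $\overline{\mD}^n$, and $\varepsilon_w$ is clearly a nonzero multiplicative linear functional, hence a point of the maximal ideal space. Injectivity of $\Phi$ is immediate because the coordinate functions $z_1,\dots,z_n$ lie in $\partial^{-N}A$ and separate points of $\overline{\mD}^n$. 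Continuity of $\Phi$ (with $M(\partial^{-N}A)$ given the Gelfand/weak-$*$ topology) follows from continuity of $w\mapsto f(w)$ for each fixed $f$. Since $\overline{\mD}^n$ is compact and $M(\partial^{-N}A)$ is Hausdorff, once surjectivity is established $\Phi$ is automatically a homeomorphism.

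The heart of the matter is surjectivity: every character $\varphi\in M(\partial^{-N}A)$ is an evaluation at some point of $\overline{\mD}^n$. The key step is to show that the $n$-tuple $w:=(\varphi(z_1),\dots,\varphi(z_n))$ lies in $\overline{\mD}^n$ and that $\varphi=\varepsilon_w$. To locate $w$, I would invoke the standard fact that $|\varphi(g)|\le \|g\|$ (spectral radius bound for characters on a commutative unital Banach algebra), applied to $g=z_j$; since $\|z_j\|_{\partial^{-N}A}$ involves the sup of $z_j$ and finitely many of its derivatives (the only nonzero one being the constant $1$ from a single differentiation), one checks directly that $\|z_j\|_{\partial^{-N}A}$ and more relevantly $\|z_j^m\|_{\partial^{-N}A}$ grow only polynomially in $m$, so that $|\varphi(z_j)|^m=|\varphi(z_j^m)|\le\|z_j^m\|_{\partial^{-N}A}\le C\,m^N$ for all $m$, forcing $|\varphi(z_j)|\le 1$. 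Hence $w\in\overline{\mD}^n$. For the identification $\varphi=\varepsilon_w$, I would use that polynomials in $z_1,\dots,z_n$ are dense in $\partial^{-N}A$: indeed the Ces\`aro (or Abel) means of the Taylor expansion of $f\in\partial^{-N}A$ converge to $f$ in the $\partial^{-N}A$-norm, since differentiation up to order $N$ interacts well with these means on the polydisc — this is exactly the generalization of the one-variable argument of \cite[Theorem~1.3]{SasTre}. On polynomials $\varphi$ and $\varepsilon_w$ agree by multiplicativity and linearity, and by continuity and density they agree on all of $\partial^{-N}A$.

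I expect the main obstacle to be the density of polynomials in $\partial^{-N}A$ in the correct norm, i.e. proving that a suitable regularization (dilation $f\mapsto f_r$ with $f_r(z)=f(rz)$ as $r\uparrow 1$, followed by truncating the Taylor series, or directly the multivariate Ces\`aro means) converges in the $\partial^{-N}A$-norm rather than merely in $A$. The point is that each partial derivative $\partial^{\alpha}f$ with $|\alpha|\le N$ is itself in $A$ by hypothesis, and one must show $\partial^{\alpha}(f_r)\to\partial^{\alpha}f$ uniformly on $\overline{\mD}^n$; since $\partial^{\alpha}(f_r)(z)=r^{|\alpha|}(\partial^{\alpha}f)(rz)$ and $\partial^{\alpha}f\in A$ is uniformly continuous on $\overline{\mD}^n$, this convergence is clear, and then a standard uniform-approximation of functions in $A$ by their Taylor polynomials reduces everything to polynomials. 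Assembling these pieces — boundedness of characters, the polynomial-density argument, and the compact-Hausdorff topology — completes the identification $M(\partial^{-N}A)\cong\overline{\mD}^n$.
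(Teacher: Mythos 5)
Your proposal matches the paper's proof in its outer structure (injectivity via the coordinate functions, continuity of $\lambda\mapsto\varphi_\lambda$, and the compact-plus-Hausdorff argument at the end), but the heart of the surjectivity step is done by a genuinely different route, and the difference matters.

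The paper establishes the bound $|\varphi(f)|\le\|f\|_\infty$ for \emph{every} $f\in\partial^{-N}A$ by an invertibility argument: if $a\notin\mathrm{range}(f)$ then $f-a$ is invertible in $A$, and by differentiating $1/(f-a)$ one sees the inverse also lies in $\partial^{-N}A$, so no character annihilates $f-a$; hence $\varphi(f)\in\mathrm{range}(f)$ and $|\varphi(f)|\le\|f\|_\infty$. Applying this to $z_j$ gives $\lambda\in\overline{\mD}^n$, and, more importantly, it shows $\varphi$ is continuous with respect to the sup norm. The surjectivity is then finished using only the \emph{sup-norm} density of polynomials in $A\supset\partial^{-N}A$, together with $\varphi(p)=p(\lambda)$ on polynomials. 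The paper is quite explicit that this manoeuvre is designed to avoid the question of polynomial density in the $\partial^{-N}A$-norm, which the author asserts ``doesn't hold for $N\ge 1$''.

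Your argument for $\lambda\in\overline{\mD}^n$ — a spectral-radius estimate via the polynomial growth of $\|z_j^m\|_{\partial^{-N}A}$ — is correct and a perfectly good alternative to the paper's invertibility argument. But your identification $\varphi=\varepsilon_\lambda$ rests on the density of polynomials in the $\partial^{-N}A$-\emph{norm} (dilation $f\mapsto f_r$ followed by Taylor truncation), which is exactly the statement the paper says is false. You should be aware of this conflict. In fact, your dilation argument looks sound as written: if $\partial^\alpha f\in A$ for $|\alpha|\le N$, then each $\partial^\alpha f$ is uniformly continuous on $\overline{\mD}^n$, so $\partial^\alpha(f_r)(z)=r^{|\alpha|}(\partial^\alpha f)(rz)\to\partial^\alpha f(z)$ uniformly, hence $f_r\to f$ in $\partial^{-N}A$-norm, and each $f_r$ extends holomorphically past $\overline{\mD}^n$ so its Taylor partial sums converge in all $C^k$-norms on $\overline{\mD}^n$. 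This suggests the paper's parenthetical remark is actually erroneous; but since the paper flatly contradicts your key lemma, you should treat that lemma as the exposed flank of your proof and verify it with care, or better, adopt the paper's more robust tactic of proving $L^\infty$-boundedness of $\varphi$ directly and appealing only to sup-norm density in $A$, thereby bypassing the contested density claim entirely.
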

\begin{proof} For each $\lambda \in \overline{\mD}^n$, point
  evaluation at $\lambda$ is a complex homomorphism $\varphi_\lambda$.
  Moreover $\partial^{-N} A$ contains the functions $z_1,\dots, z_n$
  and so we see that if $\lambda\neq \mu$, the corresponding point
  evaluations $\varphi_\lambda$ and $\varphi_\mu$ are distinct. Thus
  the map $\lambda \mapsto \varphi_\lambda $ embeds $
  \overline{\mD}^n$ in the maximal ideal space of $\partial^{-N} A$.
  Also it is easy to see that this inclusion is continuous, since if
  $(\lambda_\alpha)_{\alpha\in I}$ is a net in $\overline{\mD}^n$
  which is convergent to $\lambda \in \overline{\mD}^n$, then for each
  function $f\in \partial^{-N} A$, we have that
 $
\varphi_{\lambda_\alpha}(f)= f(\lambda_\alpha)\longrightarrow f(\lambda)=\varphi_\lambda(f),
$
by the continuity of $f$, and so
$(\varphi_{\lambda_\alpha})_{\alpha\in I}$ converges to
$\varphi_\lambda$ in the weak-$\ast$ topology in the maximal ideal
space of $\partial^{-N} A$.  Now we will show that every complex
homomorphism arises in this manner.

  Let $\varphi$ be a nontrivial multiplicative linear functional on
  $\partial^{-N} A$, and let $\lambda:=(\varphi(z_1),\dots,
  \varphi(z_n))$. Then clearly for every polynomial $p$, $\varphi(p)=p(\lambda)$. We show below  that  for all $f\in \partial^{-N} A$,
\begin{equation}
\label{eq_1.6}
|\varphi(f)|\leq \|f\|_\infty.
\end{equation}
On applying this estimate to the coordinate functions $z\mapsto
z_\ell$, we see that $\lambda \in \overline{\mD}^n$. Since
$\partial^{-N} A\subset A$, any function $f\in \partial^{-N} A$ can be
approximated by polynomials in the $L^\infty$-norm. But \eqref{eq_1.6}
implies that $\varphi$ is continuous in the $L^\infty$-norm, and so
$\varphi(f)=f(\lambda)$ holds for all $f\in \partial^{-N} A$. (Note that in
this argument, we do not need the density of polynomials in the norm
of $\partial^{-N} A$, which doesn't hold for $N\geq 1$!)

Now we will prove \eqref{eq_1.6}. If $f\in \partial^{-N} A$ is such
that
$$
\inf_{z\in \mD^n} |f(z)| > 0,
$$
then $f$ is invertible in $\partial^{-N} A $.  Indeed, as
$\partial^{-N} A\subset A$, $ \displaystyle \inf_{z\in \mD^n} |f(z)| >
0 $ implies that $f$ is invertible in $A$.  Differentiating $1/f$ $N$
times we get that all its complex partial derivatives up to the order
$N$ are in $A$.  Therefore, if $0\not\in \textrm{clos.range}(f) =
\textrm{range}(f)$, then $f$ is invertible in $\partial^{-N} A$, and
so $f$ does not belong to any proper ideal of $\partial^{-N} A$. Thus
$\varphi(f)\neq 0$ for any maximal ideal (multiplicative linear
functional) $\varphi$.  Replacing $f$ by $f- a$, $a \in \mC$, we get
that if $a\not\in \textrm{range}(f)$, then for any multiplicative
linear functional $\varphi$, $\varphi(f) \neq a$, that is, $\varphi(f)
\subset \textrm{range}(f)$. Thus $|\varphi(f)| \leq \|f\|_\infty$, and
\eqref{eq_1.6} is proved.

We have seen that the Gelfand topology of the maximal ideal space of
$\partial^{-N} A$ is weaker than the usual Euclidean topology of
$\overline{\mD}^n$, and moreover it is Hausdorff. Also with the
Euclidean topology, $\overline{\mD}^n$ is compact. Consequently, the
two topologies coincide; see for example \cite[\S 3.8.(a)]{Rud}.
\end{proof}

We now prove Theorem~\ref{theorem2} by following the same method as in
the proof of the main result in \cite{Tan}.

\begin{proof}[Proof of Theorem~\ref{theorem2}]
  If $M$ is a finitely generated projective
  $\partial^{-N}A_{\textrm{r}}$-module, then $M\otimes \mC$ is a
  finitely generated projective $\partial^{-N} A$ ($\simeq
  \partial^{-N}A_{\textrm{r}}\otimes \mC$)-module. But by
  Proposition~\ref{prop_max_id_sp}, the maximal ideal space of
  $\partial^{-N}A$ is $\overline{\mD}^n$, which is contractible. So by
  \cite{BruSas}, it follows that $\partial^{-N}A$ is projective free,
  and so $M\otimes \mC$ is free as a $\partial^{-N} A$-module. Using
  this, we will show that $M$ is free as a $\partial^{-N}
  A_{\textrm{r}}$-module.

  Let $e_1,\dots, e_k$ be a basis for $M\otimes \mC$ over
  $\partial^{-N}A$. The involution $\cdot^*$ on $\partial^{-N}A$
  defined by $ f^*(z)=\overline{f(\overline{z})}$ ($z\in \mD^n$)
  induces an involution on $M\otimes \mC$, which by slight abuse of
  notation, we will also denote by $\cdot^*$: thus if $u+iv \in
  M\otimes \mC$, where $u,v\in M$, then $(u+iv)^*=u-iv$. It is clear
  that $e_1^*,\dots, e_k^*$ also form a basis for $M\otimes \mC$ over
  $\partial^{-N}A$.

Let $U=[u_{ij}]\in (\partial^{-N}A)^{k\times k}$ be the change of basis matrix
from the basis $\{e_1,\dots, e_k\}$ to the basis $\{e_1^*,\dots,
e_k^*\}$.

Then $ e_m^*=\displaystyle \sum_{\ell=1}^k u_{m\ell} e_\ell, $
$\displaystyle e_m=e_m^{**}= \sum_{\ell=1}^k u_{m\ell}^* e_\ell^*$,
and so $ e_m^*=\displaystyle \sum_{\ell=1}^k \sum_{j=1}^k
u_{m\ell}u_{\ell j}^* e_j^*.$ Consequently,
\begin{equation}
\label{eq_Us_relation}
U(z)\overline{U(\overline{z})}=I \quad(z\in \overline{\mD}^n).
\end{equation}
Suppose we are able to find an matrix $V\in (\partial^{-N}A)^{k\times
  k}$ satisfying
\begin{equation}
\label{eq_fact_of_U}
U(z)=V(z) (\overline{V(\overline{z})})^{-1} \quad(z\in \overline{\mD}^n).
\end{equation}
Let us first observe that this would give the desired freeness of the
$\partial^{-N}A_{\textrm{r}}$-module $M$. To this end, let us define
$$
\left[ \begin{array}{ccc}
 \widetilde{e}_1 \\ \vdots \\ \widetilde{e}_k
\end{array}\right]
=
(\overline{V(\overline{z})})^{-1}
\left[ \begin{array}{ccc}
 e_1 \\ \vdots \\ e_k\end{array}\right]
+
(V(z))^{-1} \left[ \begin{array}{ccc}
 e_1^* \\ \vdots \\ e_k^*\end{array}\right] .
$$
We note first of all that $\widetilde{e}_m=\widetilde{e}_m^*$ and so
$\widetilde{e}_m\in M$. Next we claim that
$\widetilde{e}_1,\dots,\widetilde{e}_k$ are linearly independent over
$\partial^{-N}A_{\textrm{r}}$. Indeed, if there exist elements
$\alpha_1,\dots, \alpha_k\in \partial^{-N}A_{\textrm{r}}$ such that
$$
\sum_{m=1}^k \alpha_m \widetilde{e}_m=0,\textrm{
then }
0=
2 \left[ \begin{array}{ccc} \alpha_1 & \dots & \alpha_k
  \end{array}\right]
(\overline{V(\overline{z})})^{-1}
\left[
  \begin{array}{ccc} e_1 \\ \vdots \\ e_k\end{array}\right],
$$
which implies that $\alpha_1=\dots= \alpha_k=0$ (by the independence
of $e_1,\dots, e_k$ over $\partial^{-N}A$). Also, as
$$
\left[ \begin{array}{ccc}
 e_1 \\ \vdots \\ e_k\end{array}\right]
=
\frac{1}{2}\overline{V(\overline{z})} \left[ \begin{array}{ccc}
 \widetilde{e}_1 \\ \vdots \\ \widetilde{e}_k\end{array}\right] ,
$$
it follows that the span of $\widetilde{e}_1,\dots,\widetilde{e}_k$
over the ring $\partial^{-N}A_{\textrm{r}}$ is $M$. This completes the proof
that once we have a $V\in (\partial^{-N}A)^{k\times k}$ satisfying
\eqref{eq_fact_of_U}, then we have the desired freeness of $M$. Now we
will give the construction of the required $V$. In order to do this, we
will have to first deform $U$. Set
\begin{equation}
\label{eq_fact_of_U_t}
U_t(z):= U(tz) \quad (0\leq t \leq 1, \; z\in \overline{\mD}^n).
\end{equation}
This defines a path from $U(0)$ to $U(z)$. We claim that we can find
the needed $V(z)$ along this path. Hence we want $ U_t(z)= V_t(z)
(\overline{V_t(\overline{z})})^{-1}.  $ Differentiating both sides
with respect to $t$ (with a fixed $z$) and post-multiplying by
$(U_t(z))^{-1}$ gives us
\begin{equation}
\label{eq_complicated_DE}
\frac{\partial U_t}{\partial t} U_t^{-1}
=
\frac{\partial V_t}{\partial t} V_t^{-1}
-
U_t \overline{\frac{\partial V_t}{\partial t} V_t^{-1}  (\overline{z})}U_t^{-1}.
\end{equation}
So we need to solve this differential equation.

Consider instead the equation
\begin{equation}
\label{eq_aux_DE}
\frac{\partial V_t}{\partial t} V_t^{-1}
=
\frac{1}{2} \frac{\partial U_t}{\partial t} U_t^{-1}
\end{equation}
with the initial condition $V_0(z)$ being any complex matrix such that
\begin{equation}
\label{eq_aux_init_cond}
U(0)= V_0(z)( \overline{V_0(z)})^{-1}.
\end{equation}
The existence of this matrix $V_0(z)$ will be shown later. For each
fixed $z$, the equation \eqref{eq_aux_DE} has a solution $t\mapsto V_t
(z)$, and by \cite[\S 2.3, p.59]{LinSeg}, we have that for each fixed
$t$, the map $z\mapsto V_t(z): \mD^n \rightarrow \mC^{k\times k}$ is
holomorphic. Also, since $(t,z) \mapsto U(tz): [0,1]\times
\overline{\mD}^n\rightarrow \mC^{k\times k}$ is $C^N$, it follows that
the right hand side of the equation \eqref{eq_aux_DE} is $C^{N-1}$,
and so the map $(t,z) \mapsto V_t(z): [0,1]\times
\overline{\mD}^n\rightarrow \mC^{k\times k}$ is $C^N$ as well. (In
particular, from the above observations, it follows that
$V:=V_1(\cdot)\in (\partial^{-N}A)^{k\times k}$.)

Now we show that the solution $V_t$ to our auxiliary differential
equation \eqref{eq_aux_DE} in fact solves the original differential
equation \eqref{eq_complicated_DE}. Note first that from
\eqref{eq_Us_relation}, we obtain
$$
U_t^{-1}\frac{\partial U_t}{\partial t} +\overline{ \frac{\partial U_t}{\partial t} U_t^{-1}(\overline{z})}=0.
$$
We have
$$
\frac{\partial V_t}{\partial t} V_t^{-1}
-
U_t \overline{\frac{\partial V_t}{\partial t} V_t^{-1}  (\overline{z})}U_t^{-1}
=
 \frac{1}{2} \frac{\partial U_t}{\partial t} U_t^{-1}
 -
 U_t \overline{\frac{1}{2} \frac{\partial U_t}{\partial t} U_t^{-1}(\overline{z})}U_t^{-1}
=
\frac{\partial U_t}{\partial t} U_t^{-1}.
$$
The only remaining step is to show the existence of a matrix $V_0(z)$
such that \eqref{eq_aux_init_cond} holds.  Let $U(0)=P+iQ$, where
$P,Q\in\mR^{k\times k}$. Using \eqref{eq_Us_relation}, it follows that
$P^2+Q^2=I$ and $PQ=QP$.  Since $U(0)$ is invertible, it has a
logarithm, given by the holomorphic function calculus:
$$
\log U(0) =\frac{1}{2\pi i} \int_\gamma (\log\zeta )(\zeta I-(P+iQ))^{-1} d\zeta,
$$
where $\gamma$ is any simple curve in the domain $\mC\setminus
e^{i\theta}(-\infty, 0]$ that contains the spectrum of $U(0)$ in its
interior, and the $\theta$ is chosen so that the ray
$e^{i\theta}(-\infty, 0]$ avoids all eigenvalues of $U(0)$.  Also,
since $PQ=QP$, it is clear that the real and imaginary parts of
$(\zeta I-(P+iQ))^{-1}$ commute with each other.  As $\log {\zeta}$ is
just a scalar, it follows that the real and imaginary parts of $\log
U(0)$ commute, that is, if $U(0)=X+iY$, with $X,Y\in\mR^{k\times k}$,
then $XY=YX$. In particular, $X+iY$ and $X-iY$ also commute. We have
$U(0)=e^{X+iY}$, and thus \eqref{eq_Us_relation} gives $
e^{2X}=e^{X+iY}e^{X-iY}=U(0)\overline{U(0)}=I.$ Now let $S$ be an
invertible matrix taking $X$ to its Jordan canonical form: $ X=SJ
S^{-1}$, where $J$ is a block diagonal matrix with Jordan blocks.
Since $e^{2X}=I$, we obtain $Se^{2J}S^{-1}=I$, and so $(e^{J})^2=I$.
But for a Jordan sub-block
$$
J_\ell:=\left[\begin{array}{cccc}
    \lambda & 1 & &  \\
    & \ddots &\ddots   & \\
    & & &  1\\
    & & & \lambda \end{array}\right]\in \mC^{m\times m}, \;\;
e^{J_\ell}=e^{\lambda}\left[\begin{array}{ccccc}
1 & \frac{1}{1!} & \frac{1}{2!} & \dots & \frac{1}{(m-1)!} \\
  & 1 & \frac{1}{1!} & \frac{1}{2!} & \\
  &   & \ddots & \ddots & \frac{1}{2!}\\
  &   &        & \ddots & \frac{1}{1!} \\
 & & & & 1
\end{array}\right],
$$
and so we conclude that all eigenvalues of $J$ must be $0$, and
moreover, each Jordan block $J_\ell$ is of size $1\times 1$. So $J=0$
and so $X=0$.  Define $V_0(z):=e^{iY/2}$. Then $(
\overline{V_0(z)})^{-1}=(e^{-iY/2})^{-1}=e^{iY/2}$. Consequently,
$$
 V_0(z)( \overline{V_0(z)})^{-1}=e^{iY/2}\cdot e^{iY/2}=e^{iY}=U(0),
$$
as required.
\end{proof}

In light of our result above, it is natural to ask the following
question: Is the real symmetric polydisc algebra
$A_{\textrm{r}}:=\{f\in A:f(z)=\overline{f(\overline{z})} \; (z\in
\mD^n)\}$ projective free?

% \medskip

% \noindent The real symmetric polydisc algebra
% $$
% A_{\textrm{r}}(\overline{\mD}^n):= C_{\textrm{r}}(\overline{\mD}^n;\mC)\cap H_{\textrm{r}}( \mD^n)
% $$
% and both $ C_{\textrm{r}}(\overline{\mD}^n;\mC)$ as well as $H_{\textrm{r}}( \mD^n)$ are
% projective free. Moreover, since the maximal ideal space of the
% polydisc algebra
% $$
% A(\overline{\mD}^n):= C(\overline{\mD}^n;\mC)\cap H( \mD^n)
% $$
% is the polydisc $\overline{\mD}^n$, which is contractible, it follows
% that $A(\overline{\mD}^n)$ is projective free.

\medskip

\noindent {\bf Acknowledgements:} The author thanks Rudolf Rupp for
finding an error in an earlier argument of Theorem~\ref{theorem}, and
the referee for useful remarks which improved the presentation of the
article. A discussion with Thomas Schick is also gratefully
acknowledged.

\end{document}